\renewenvironment{quote}{\list{}{\leftmargin=0.7in\rightmargin=0.7in}\item[]}{\endlist}
\newcommand{\ZZ}{\mathbb{Z}}
\newcommand{\QQ}{\mathbb{Q}}
\newcommand{\CC}{\mathbb{C}}
\newcommand{\PP}{\mathbb{P}}
\newcommand{\cM}{\mathcal{M}}
\newcommand{\cMb}{\overline{\mathcal{M}}}
\newcommand{\cO}{\mathcal{O}}
\newcommand{\idx}[1]{\index{#1}{\em #1}}
\DeclareMathOperator{\vdim}{vdim}
\begin{document}

\title{Introduction to Gromov-Witten theory}

\author{Simon C. F. Rose}
\institute{Field's Institute}

\maketitle

\abstract{The goal of these notes is to provide an informal introduction to Gromov-Witten theory with an emphasis on its role in counting curves in surfaces. These notes are based on a talk given at the Fields Institute during a week-long conference aimed at introducing graduate students to the subject which took place during the thematic program on Calabi-Yau Varieties: Arithmetic, Geometry, and Physics.}

\section{Introduction}

We begin with a natural, and quite old, question.

\begin{question}
How do you count rational curves in a smooth variety?
\end{question}

\begin{example}
The simplest examples of this are the following.
\begin{enumerate}
\item How many straight lines pass between two points in the plane?

This question is one of my favourites to ask people when they ask me to explain what I do as a mathematician. The answer is very easy, but most people who hear the question expect it to be a trick question, not realizing that the answer is the relatively obvious answer of 1.
\item How many conics pass through five points in the plane?

The fact that this is 1 is a classically known fact. How can you show this? One of my favourite was of doing so is to construct the solution explicitly, which we can do as follows.

Choose 5 points \((x_i, y_i)\) in general position, and consider the following determinant.
\[
\begin{vmatrix}
1 & X   & Y   & X^2   & Y^2   & XY \\
1 & x_1 & y_1 & x_1^2 & y_1^2 & x_1y_1 \\
1 & x_2 & y_2 & x_2^2 & y_2^2 & x_2y_2 \\
  &     &     & \vdots \\
1 & x_5 & y_5 & x_5^2 & y_5^2 & x_5y_5 \\
\end{vmatrix}
\]
This gives you a polynomial \(f(X,Y)\) of degree (at most) 2. With a little bit of thought, you should be able to see that this polynomial is exactly the conic that passes through these five points.

\item How many (nodal) cubics pass through eight points in the plane?

The answer to this, 12, is also classically known, but is more subtle. It seems na\"ively that one should be able to use the exact same trick as before to determine a cubic which passes through our 8 points. However, as we shall see, it would take 9 points to do this, and in the end we would find a smooth genus 1 curve, not a rational curve. Consequently, the method that we use for conics does not work here.
\end{enumerate}
\end{example}

It is not too hard to see the pattern here. We are interested in counting degree \(d\) curves in the plane (which for technical reasons we consider to be \(\PP^2\) instead of \(\CC^2\)), and we see that we are imposing the condition that it pass through \(3d - 1\)  points (in general position). For example, \(d = 1\) is a line, \(d = 2\) is a conic, etc.

This of course raises a natural question. {\em Why} is it \(3d - 1\) points?

\begin{lemma}
The space of nodal, rational degree \(d\) curves is \((3d - 1)\)-dimensional.
\end{lemma}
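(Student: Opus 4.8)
The plan is to compute the dimension by first parametrizing \emph{all} degree $d$ plane curves and then cutting down by the nodal conditions needed to force the geometric genus to zero.

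First I would parametrize the ambient family. A degree $d$ curve in $\PP^2$ is the zero locus of a homogeneous polynomial of degree $d$ in the three homogeneous coordinates, and the number of degree-$d$ monomials in three variables is $\binom{d+2}{2}$. Since the defining polynomial is determined only up to scaling, the space of all degree $d$ plane curves is a projective space $\PP^N$ with
\[
N = \binom{d+2}{2} - 1 = \frac{d(d+3)}{2}.
\]
Next I would invoke the genus-degree formula: a smooth plane curve of degree $d$ has genus $\frac{(d-1)(d-2)}{2}$. A rational curve has geometric genus $0$, and acquiring a single node drops the geometric genus of the normalization by exactly $1$, so a nodal degree $d$ curve must carry
\[
\delta = \frac{(d-1)(d-2)}{2}
\]
nodes in order to be rational.

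The key step is to argue that each node imposes exactly one condition on the coefficients, so that the locus of $\delta$-nodal curves has codimension $\delta$ in $\PP^N$. The heuristic I would make precise is this: fixing a point $p$, requiring $\{f = 0\}$ to be singular at $p$ is three linear conditions on the coefficients of $f$ (the vanishing of the partial derivatives, with $f(p)=0$ forced by Euler's relation), but $p$ is free to range over the $2$-dimensional $\PP^2$; so imposing a single node at an unspecified location is a net $3 - 2 = 1$ condition. Imposing $\delta$ nodes then drops the dimension by $\delta$, and subtracting gives the claim:
\[
\frac{d(d+3)}{2} - \frac{(d-1)(d-2)}{2} = \frac{(d^2 + 3d) - (d^2 - 3d + 2)}{2} = 3d - 1.
\]

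The hard part is the independence of these $\delta$ conditions: one must show that requiring $\delta$ simultaneous nodes really cuts the dimension by $\delta$ rather than by something smaller, and that the resulting family is nonempty and has this expected dimension. A priori the tangent conditions at the various nodes could fail to be linearly independent, making the locus larger than expected or even empty. Establishing that generic $\delta$-nodal degree $d$ curves exist and form a family of the expected dimension $3d-1$ is precisely the content of the classical theory of Severi varieties; I would either cite this as the essential input or, at the level of these notes, treat the independence as the generic expectation and verify it by hand in the low-degree cases ($d = 1, 2, 3$) already discussed.
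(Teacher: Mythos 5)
Your proposal is correct and follows essentially the same route as the paper's proof: count the $\binom{d+2}{2}-1$ dimensions of the space of degree $d$ curves, use the genus formula $\frac{(d-1)(d-2)}{2}$, and subtract one condition per node. You go further than the paper in justifying why a node is codimension one and in flagging the independence of the nodal conditions (the Severi variety input), which the paper simply asserts.
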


\begin{proof}
Despite the earlier comment, we will work in the affine setting for simplicity. In such a case, a degree \(d\) curve is given by the zeros of a polynomial
\[
f(x, y) = \sum_{0 \leq i + j \leq d} a_{i,j}x^iy^j
\]
with at least one of the terms \(a_{i,j}\) with \(i + j = d\) is not zero.

The space of all such polynomials is given by varying the coefficients. It is easy to see that there are
\[
\sum_{0 \leq i + j \leq d} 1 = {d + 2 \choose 2}
\]
of these, and so the space of degree \(d\) plane curves has dimension \({d + 2 \choose 2} - 1\), since we only care about the zeros of the polynomial (i.e. we only care about the polynomial up to an overall scaling factor).

Now, a generic curve in such a family will be smooth and have genus
\[
g(C) = \frac{(d - 1)(d - 2)}{2}
\]
(see exercise \ref{ex_genus_plane_curve}) and so if we impose the condition that the curve have \(g(C)\) nodes (which are each codimension one conditions), then we see that the resulting curve will be rational, and the space will have dimension
\[
\frac{(d + 2)(d + 1)}{2} - 1 - \frac{(d - 1)(d - 2)}{2} = 3d - 1
\]
as claimed.
\end{proof}

We now see why we need \(3d-1\) points, but this then pushes us forward to the next question. How do we actually count the number of these curves?

\section{Moduli of stable maps}

There are a number of different ways of counting curves in a smooth variety \(X\) (see in particular the paper \cite{pt_ways}). In dimension three, following through with the idea above leads to Donaldson-Thomas theory. That is, we can study curves by understanding the local equations which define them---by looking at the sheaves
\[
0 \to \cO_X(-C) \to \cO_X \to \cO_C \to 0
\]
or more specifically, the object \([\cO_X \to \cO_C]\) in the derived category \(D^b(Coh(X))\).

We will take a different approach. One downside of embedded curves \(C \subset X\) is that the singularities of such curves can be arbitrarily bad. However, if we try to understand these curves by parametrizing them---that is, by looking at maps \(f : \Sigma \to X\) such that \(f(\Sigma) = C\)---then we can restrict ourselves to curves \(\Sigma\) with at worst {\em nodal} singularities.

To further describe this, we will need a few definitions.

\begin{definition}
A genus \(g\), \(n\)-marked \idx{pre-stable curve} consists of the data \((C, x_1, \ldots, x_n)\) where
\begin{enumerate}
\item \(C\) is a (possibly nodal) curve of arithmetic genus \(g\) (i.e. \(\chi(\cO_C) = 1 - g\)).
\item \(x_i\) are smooth points of \(C\)
\end{enumerate}
Furthermore, the curve \((C, x_1, \ldots, x_n)\) is {\em stable} if it has only finitely many automorphisms.
\end{definition}

There is a well-defined moduli ``space'' of such curves (actually, a stack, or an orbifold), which we denote by \(\overline{\cM}_{g,n}\), which has dimension \(3g - 3 + n\). This is a classical object of interest, which has been studied in many different ways. For our purposes, we mostly only consider it to be of tangential interest.

Our next definition is the more important one for our purposes.

\begin{definition}
Let \(X\) be a smooth projective variety (this can be relaxed somewhat). Then a genus \(g\), \(n\)-marked \idx{stable map} into \(X\) consists of the following data.
\begin{enumerate}
\item A genus \(g\), pre-stable curve \((C, x_1, \ldots, x_n)\).
\item A map \(f : C \to X\) with only finitely many automorphisms
\end{enumerate}
where an automorphism of a map \(f\) is a map \(h\) for which the following diagram commutes.
\[
\xymatrix{
(C, x_1, \ldots, x_n) \ar[rr]^f\ar[d]_h & & X \\
(C, x_1, \ldots, x_n) \ar[urr]_f
}
\]
\end{definition}

What we want to do is to consider the moduli ``space'' of such objects. That is, we consider
\[
\cMb_{g,n}(X) = \{f : (C, x_1, \ldots, x_n) \to X \mid f \text{ is stable}\},
\]
whatever this object may be\footnote{The best way to define this is as a category whose objects are flat families of stable maps, and whose morphisms are commutative cartesian diagrams. That this is a category is reasonably clear; that it is any sort of ``space'' is far less so. However, many of the other structures described below become fairly clear in this context. For good references (admittedly, in the orbifold setting), see \cite{agv, agv_long, gillam}.}. We further refine this by the discrete data of the homology class of the image of \(f\). That is, fix a homology class \(\beta \in H_2(X)\). We denote by
\[
\cMb_{g,n}(X,\beta) = \{f : (C, x_1, \ldots, x_n) \to X \mid f_*[C] = \beta, f \text{ is stable}\}
\]
We should remark that this is an empty moduli space of the homology class \(\beta\) does not support holomorphic curves.

Let us look at a few examples.

\begin{example}
The simplest example is that of lines in \(\PP^2\). Consider
\[
\cMb_{0,0}(\PP^2, 1)
\]
(where we use the convention that, if \(H_2(X) \cong \ZZ\), then we use an integer to represent the homology class which is that multiple of a generator).

This parameterizes maps \(\PP^1 \to \PP^2\) up to reparameterization of the map. That is, this is nothing but the collection of lines in \(\PP^2\), or \((\PP^2)^*\). This is obviously smooth, compact, and irreducible. It is the best of all worlds.
\end{example}

\begin{example}
Things rapidly degenerate from here, however. Let us consider the next simplest case, that of conics. Consider
\[
\cMb_{0,0}(\PP^2, 2).
\]
This {\em should} be the space of conics in \(\PP^2\), but it is not. This moduli space is built up as follows.

\begin{enumerate}
\item There is an locus of maps whose sources are smooth (this is denoted \(\cM_{0,0}(\PP^2,2)\)---note the lack of a bar over the \(\mathcal{M}\).). Generically, the image of such a map will be a smooth conic, which will be an open locus in the space of all conics, which itself is isomorphic to \(\PP^5\).

\item Within this locus of maps, there is a sublocus consisting of those maps which map as \(2 : 1\) covers of a line in \(\PP^2\). This is a 4-dimensional locus, since we need two parameters to describe the target line, and two to describe the ramification points of the map. Each map in this locus also has \(\ZZ/2\) as an automorphism group, coming from the exchange of the covering sheets.

\item At the `boundary' (i.e. in \(\cMb_{0,0}(\PP^2, 2) \setminus \cM_{0,0}(\PP^2, 2)\)), there are those curves whose domains consist of a nodal curve with two components, each of which maps with degree 1 into \(\PP^2\). Within this, there is the locus of those maps with image two distinct lines (which necessarily join at one point). This is a four dimensional space, two for each line in \(\PP^2\).

\item Deeper into the boundary, there is the locus of those curves with nodal sources, but whose image are both the same line. This is three dimensional; two for the line, and one for the point on that line where the two components meet. Furthermore, every map in this locus also has  \(\ZZ/2\) as automorphism group, since there is an automorphism of the source curve which exchanges the two components.
\end{enumerate}



\end{example}


Despite the fact that there are multiple components of differing dimensions, this is not all that bad. There is a dense open set consisting of the smooth conics, and as we move to higher and higher codimension strata, our curves/maps degenerate in predictable ways.

This is, however, not always the case.

\begin{example}
Let us consider the moduli space \(\cMb_{1,0}(\PP^2, 1)\). This is supposed to be the moduli space of genus 1 maps into \(\PP^2\) of degree 1. It is tempting to say that this is empty (after all, a genus 1 curve in \(\PP^2\) must have degree at least 3). However, this is not the case.

What is true is that the ``open locus'' of smooth curves is empty. That is, if as above we define \(\cM_{g,n}(X, \beta)\) to be the collection of stable maps into \(X\) whose source curve is smooth, then we certainly have
\[
\cM_{1,0}(\PP^2, 1) = \emptyset.
\]
However, there are non-smooth maps. Consider a curve which is a genus 0 curve connected to a genus 1 curve at one point; this is a pre-stable curve. Moreover, we can map it into \(\PP^2\) by mapping the genus 0 curve onto a line, and by collapsing the genus 1 curve to a point. Consequently, this space is 4-dimensional: two for the line, one for the point on the line, and one for the modulus of the elliptic curve.
\end{example}

We can say the following. From the Hirzebruch-Riemann-Roch formula (see \cite{hirz-book} and \cite[Section 7.1.4]{coxetal} for its application in this context), we can say that the \idx{expected dimension} of the moduli space \(\cMb_{g,n}(X,\beta)\) is given by the formula
\[
\vdim \cMb_{g,n}(X, \beta) = (\dim X - 3)(1 - g) + \int_\beta c_1T_X + n
\]
In each of the three examples above, this is respectively 2, 5, and 3. In the first two cases, this is the top dimension of the moduli space, and so all is well. In the case of \(\cMb_{1, 0}(\PP^2, 1)\), we saw that the dimension was 4, while the virtual (expected) dimension is only 3.

Moreover, we can also see that, for rational curves in \(\PP^2\), that this formula is consistent with our \(3d - 1\) points discussion earlier. Since \(K_{\PP^2} \cong \cO(-3)\), it follows that
\[
\int_{dH} c_1T_{\PP^2} = 3d
\]
and hence the virtual dimension of \(\cMb_{0,0}(\PP^2,d) = 3d - 1\).

Lastly, we should note that this also suggests part of our general interest in Calabi-Yau threefolds. In such a case, we see that most of the terms in the dimension formula vanish: \(\dim X = 3\) covers the first term, while \(c_1T_X = 0\) covers the second. Thus, if \(X\) is a Calabi-Yau threefold, then
\[
\vdim \cMb_{g,0}(X, \beta) = 0
\]
and so we should generically expect finitely many curves of any genus in one of these varieties.

Of course, the reality is much more complex.

\section{Gromov-Witten Invariants}

We want to use the previously discussed \(\cMb_{g,n}(X,\beta)\) to count holomorphic curves in \(X\). In an ideal world, such a ``space'' would be both smooth, compact, have components all of the same dimension (the expected dimension, of course), all of which would allow us to use intersection theory to count curves. For the sake of exposition, let us make these simplifying assumptions to see where we can go from this.

We first note that this space comes together with some \idx{evaluation maps} to \(X\). That is, there are maps
\[
ev_i : \cMb_{g,n}(X,\beta) \to X
\]
defined by
\[
\big(f : (C, x_1, \ldots, x_n) \to X\big) \mapsto f(x_i)
\]

Consider now subvarieties \(V_1, \ldots, V_n\) of \(X\). Their homology classes have Poincar\'e duals \(\gamma_i \in H^{n_i}(X)\), and so we can consider the cohomology class \(ev_i^*\gamma_i\). The Poincar\'e dual of this class represents the collection of maps \(f : C \to X\) such that \(f(x_i) \in V_i\). Moreover, since the cup product is Poincar\'e dual to intersection for smooth manifolds, we have that
\[
ev_1^*\gamma_1 \smile \cdots \smile ev_n^*\gamma_n
\]
represents exactly (in a suitably generic setting) those maps \(f : C \to X\) such that \(f(x_i) \in V_i\) for all \(1 \leq i \leq n\). Since the location of the points on \(C\) is arbitrary (i.e. varies over the moduli space), we can read this as
\vskip0.75cm

\begin{quote}
The cohomology class \(ev_1^*\gamma_1 \smile \cdots \smile ev_n^*\gamma_n\) represents the
collection of morphisms \(f : C \to X\) such that the image \(f(C)\) intersects \(V_i\) for all \(1 \leq i \leq n\).
\end{quote}
\vskip0.75cm
If this is a finite number (which should generically occur if this class is a top class in \(H^*\big(\cMb_{g,n}(X,\beta)\big)\)), then by pairing it with the fundamental class we should get the number of such curves. That is, if we consider the integral
\[
\int_{\cMb_{g,n}(X,\beta)}ev_1^*\gamma_1 \smile \cdots \smile ev_n^*\gamma_n
\]
then this number is exactly the number of genus g curves in \(X\) such that they have non-zero intersection with the subvarieties \(V_1, \ldots, V_n\) as desired.

Now, we have assumed for the purposes of this discussion that the moduli space is smooth, compact, and finite-dimensional. Unfortunately, this is not necessarily true. It is proper (compact), but it is often not smooth, and it often has many different components of varying dimensions as we saw before.

The resolution of this is the following. By a general construction due to Behrend-Fantechi (\cite{bf_normal_cone}), we can always construct a so-called \idx{virtual fundamental class} for \(\cMb_{g,n}(X,\beta)\). This is a homology class which we denote as
\[
[\cMb_{g,n}(X,\beta)]^{vir}
\]
which satisfies a number of properties which make it work much like the ordinary fundamental class. The simplest of these is that it is a homology class of pure dimension, which is the expected dimension of \(\cMb_{g,n}(X,\beta)\), which puts us a case that resembles the ideal one described above.

\begin{remark}
We will (as with most introductions to Gromov-Witten theory) largely ignore any technical issues surrounding the virtual fundamental class. For the most part we will treat it as if it is the ordinary fundamental class. For any cases where this is not true, we will do our best to highlight those cases clearly.
\end{remark}

With that in mind, we define the following.

\begin{definition}
Let \(\gamma_1, \ldots, \gamma_n \in H^*(X)\). We define the corresponding \idx{Gromov-Witten invariant} to be
\[
\langle \gamma_1, \ldots, \gamma_n\rangle_{g, \beta}^X = \int_{[\cMb_{g,n}(X,\beta)]^{vir}}ev_1^*\gamma_1 \smile \cdots \smile ev_n^*\gamma_n
\]
where the maps \(ev_i\) are the evaluation maps discussed before.
\end{definition}

\begin{quote}
\begin{remark}
A small public service announcement: For those of you who are reading this who work with \LaTeX, the symbols \(\langle, \rangle\) used above are {\em not} less than/greater than signs (\(<, >\)). These should not be used as delimiters, as the default spacing for them in \LaTeX\ is that of a relation symbol. Furthermore, they just look a little squashed and silly.

Instead, you should use the terms \verb|\langle| and \verb|\rangle| (\(\langle\) and \(\rangle\), respectively). Not only do they look better, but the whitespace around them also looks better, and they can even be resized depending on contents with the use of \verb|\left\langle| and \verb|\right\rangle|:
\[
\left\langle \frac{1}{2}\right\rangle
\]

This has been your public service announcement. We now return to the regular programming.
\end{remark}
\end{quote}

\begin{remark}
It is clear from the fact that \([\cMb_{g,n}(X,\beta)]^{vir}\) is of pure dimension equal to the expected dimension that the Gromov-Witten invariant \(\langle \gamma_1, \ldots, \gamma_n\rangle_{g, \beta}^X\) is zero unless
\[
\sum_{i=1}^n \deg \gamma_i = 2 \vdim \cMb_{g,n}(X, \beta).
\]
\end{remark}

As stated above, in the ideal case the Gromov-Witten invariant \(\langle \gamma_1, \ldots, \gamma_n\rangle_{g, \beta}^X\) provides a count of the number of holomorphic curves in \(X\) that intersect varieties \(V_i\) whose Poincar\'e duals are given by \(\gamma_i\).

\begin{example}
Let \(L\) denote the class of a line in \(\PP^2\), and so \(L^2 = pt\) is the class of a point. Then the Gromov-Witten invariant
\[
\langle \underbrace{pt, \ldots, pt}_{3d-1} \rangle_{0, d}^{\PP^2}
\]
is (in this case) exactly the number of degree \(d\) rational curves in \(\PP^2\) passing through \(3d - 1\) points.
\end{example}

So what does this give us? On the surface, it doesn't simplify matters very much at all. All we have done is replaced difficult enumerative computations with a somewhat abstract and complicated computational formalism. It is not clear that this is of any use in telling us how many degree \(d\) rational curves there are in \(\PP^2\) passing through \(3d - 1\) points.

It turns out, however, that the introduction of all of this structure helps quite a lot. In particular, these moduli spaces have a number of different maps between them which will allow us a lot of leeway to compute Gromov-Witten invariants.

Let us list a few of these maps, and then we will discuss some of their properties.

First of all, we have \idx{forgetful morphisms}. Assuming the latter moduli space exists, these are morally given by morphisms (we will explain the subtlety shortly)
\[
\pi_{n+1} : \cMb_{g,n+1}(X,\beta) \to \cMb_{g,n}(X, \beta)
\]
where we map
\[
\big( f: (C, x_1, \ldots, x_{n+1}) \to X \big) \mapsto \big( f: (C, x_1, \ldots, x_{n}) \to X \big).
\]
That is, we forget the \((n+1)\)-st marked point of the source curve.



We also have a map
\[
\pi : \cMb_{g,n}(X,\beta) \to \cMb_{g,n}
\]
which forgets the map (and the target space), provided again that the latter moduli space exists (i.e. provided that \(2g - 2 + n > 0\). For the most part, we will not use this map, although it can be used to provide a parallel definition of Gromov-Witten invariants. That is, given the diagram
\[
\xymatrix{
\cMb_{g,n}(X, \beta) \ar[d]_\pi \ar[rr]^{ev} & & X \times \cdots \times X \\
\cMb_{g,n}
}
\]
we can equally define the Gromov-Witten invariant
\[
\langle \gamma_1, \ldots, \gamma_n \rangle_{g,\beta}^x = \pi_* ev^*(\gamma_1 \times \cdots \times \gamma_n) \frown [\cMb_{g,n}]
\]
which makes sense as \(\cMb_{g,n}\) is a smooth orbifold, and so has a well-defined fundamental class (in the ordinary sense).

\begin{remark}
We must be slightly careful with both of these forgetful morphisms. Let us focus on the forgetful map \(\cMb_{g,n}(X, \beta) \to \cMb_{g,n}\). Recall that this takes a stable map \((f : C \to X)\) and maps it to the underlying curve. The issue is that the underlying curve itself is only pre-stable, and so may not actually lie in the moduli space \(\cMb_{g,n}\). For example, if we had the dual graph of \(C\) given by
\begin{equation}\label{eq_pre_stable_curve}
\xymatrix{
& &\\
& *{\bullet} \ar@{-}[l]\ar@{-}[u]\ar@{-}[d]\ar@{-}[rr] && *{\bullet} \ar@{-}[r] &\\
&
}
\end{equation}
(with the vertices representing genus 0 irreducible components, and the tails representing marked points), then this is a stable map if it is non-constant on the component with one marked point (the other component has four `special' points). However, the curve itself is not stable, since this other component only has two `special' points.

The solution is to {\em stabilize} the underlying curve. The idea is simply to collapse any components of the curve with too few `special' points. For example, the curve shown in \eqref{eq_pre_stable_curve} would be stabilized to
\[
\xymatrix{
& \\
& *{\bullet} \ar@{-}[l]\ar@{-}[r]\ar@{-}[u]\ar@{-}[d] & \\
&
}
\]
which is a genus zero curve with four marked points, as we would expect. The key technicality is that we can do this in families, a technique called {\em stable reduction} (see \cite{knud}). This works similarly with the maps which forget marked points
\end{remark}

Using these maps (and some properties of the virtual fundamental class), we can show that the Gromov-Witten invariants satisfy the following axioms.

\begin{enumerate}
\item Fundamental Class Axiom: We have the equality
\[
\langle \gamma_1, \ldots, \gamma_{n-1}, [X]^\vee\rangle_{g,\beta}^X = \langle \gamma_1, \ldots, \gamma_{n-1}\rangle_{g,\beta}^X
\]

We can think of this as saying that imposing the constraint that a point on our curve be incident to \(X\) is no condition at all. This has the further consequence that
\[
\langle \gamma_1, \ldots, \gamma_{n-1}, [X]^\vee\rangle_{g,\beta}^X = 0
\]
provided that \(n + 2g \geq 0\) or that \(\beta \neq 0\) and \( n \geq 1\). This is since the moduli spaces in question on the left- or right-hand side have different dimension; it thus follows that if the forgetful map exists, then we must have that the Gromov-Witten invariants are zero.

\item Divisor Axiom: If the same conditions are satisfied, and if \(\gamma_n \in H^2(X, \QQ)\), then
\[
\langle \gamma_1, \ldots, \gamma_{n-1},\gamma_n\rangle_{g, \beta}^X = \Big(\int_\beta \gamma_n \Big) \langle \gamma_1, \ldots, \gamma_{n-1}\rangle_{g, \beta}^X
\]

In this case, this is morally due to the fact that the possible number of points that a curve may intersect a divisor is exactly \(\int_\beta \gamma_n\).

\item Point Mapping Axiom: The invariants with \(\beta = 0\), \(\langle \gamma_1, \ldots, \gamma_n \rangle_{g,0}^X\), satisfy
\[
\langle \gamma_1, \ldots, \gamma_n \rangle_{g,0}^X =
\begin{cases}
\int_X \gamma_1 \smile \gamma_2 \smile \gamma_3 & n = 3 \\
0 & \text{otherwise}
\end{cases}
\]

From this we note that the Gromov-Witten invariants of \(X\) include as special cases the triple products in cohomology.
\end{enumerate}

These three axioms together tell us a lot about the Gromov-Witten theory of varieties of dimension 1 and 2. In particular, the divisor axiom in both cases reduces us to computing (for surfaces)
\[
\langle \underbrace{pt, \ldots, pt}_n \rangle_{g,\beta}^S
\]
where
\[
n = (g - 1) + \int_\beta c_1T_S
\]
from which we can compute all other invariants.

For curves, this reduces even further: point insertions are divisors, and so the only invariant we need to compute is the empty bracket. But this will only make sense if the virtual dimension of \(\cMb_{g,0}(C, d[C])\) is zero. That is, if
\[
2g - 2 + d\big(2 - 2g(C)\big) = 0
\]
or equivalently, that \(g = dg(C) - d + 1\). In such a case, we are considering unramified covers of the target \(C\), which can be counted by enumerating index \(d\) subgroups of the fundamental group of \(C\); in particular, if \(g = g(C) = 1\), then we are enumerating index \(d\) sublattices of \(\ZZ^2\), the number of which is classically known (see exercise \ref{ex_lattices}) to be given by \(\sigma_1(d) = \sum_{k \mid d} k\).

It follows then that Gromov-Witten theory lets us count the number of unramified covers of a target curve. What about ramified covers? It turns out that, with some care, we can study this by looking at so-called {\em descendent invariants}. We will, however, omit this discussion from these notes. For a thorough discussion about this matter, see \cite{op_gw_hurwitz}.

\section{Gromov-Witten Potential}

The key to working with Gromov-Witten invariants to their full potential is to do what one should always do when confronted with an infinite collection of numbers depending on discrete data: arrange them into a generating function.

In order to do so, we need to fix some notation. As before, let \(X\) be a smooth projective variety, and let \(\gamma_0, \ldots, \gamma_m\) be a basis of \(H^*(X)\) such that
\begin{enumerate}
\item \(\gamma_0 = 1 = [X]^\vee\in H^0(X)\)
\item \(\gamma_1, \ldots, \gamma_r\) is a basis of \(H^2(X)\).
\end{enumerate}

\begin{definition}
We define the genus \(g\) \idx{Gromov-Witten potential function} of \(X\) to be the formal series 
\[
\Phi_g^X(y_0, \ldots, y_m, q) = \sum_{k_0, \ldots, k_m} \sum_{\beta \in H_2(X)} \langle\gamma_0^{k_0}, \ldots, \gamma_m^{k_m}\rangle_{g,\beta}^X \frac{y_0^{k_0}}{k_0!} \cdots \frac{y_m^{k_m}}{k_m!} q^\beta
\]
\end{definition}

\begin{remark}
The \(q^\beta\) term might look a little odd, as \(\beta\) is a homology class. To make this precise, we can look at it in the following way.

Let \(\beta_1, \ldots, \beta_r\) be a basis of \(H_2(X)\). For convenience, it is sometimes nice to choose it to be dual to the basis \(\gamma_1, \ldots, \gamma_r\) of \(H^2(X)\) in the sense that
\[
\int_{\beta_i}\gamma_j = \delta_i^j
\]
although this is not necessary. In such a case, we can write any \(\beta = \sum_{i=1}^r d_i\beta_i\). We then consider formal variables \(\{q_i\}_{1 \leq i \leq r}\) and define \(q^\beta\) to be
\[
q^\beta = q_1^{d_1} \cdots q_r^{d_r}.
\]
As \(q^\beta\) can be manipulated similarly (i.e. \(q^{\beta_1 + \beta_2} = q^{\beta_1}q^{\beta_2}\)), it doesn't really matter. Writing \(q^\beta\) is more invariant (i.e. does not rely on a choice of basis), which is one reason that it may be preferred. 
\end{remark}

\begin{remark}
We have made a little bit of a sleight-of-hand switch in notation which if not pointed out, is bound to be a source of confusion.

We choose our basis of cohomology \(\{\gamma_i\}\) to be a basis of the cohomology ring {\em as a vector space}, not as an algebra. As the Gromov-Witten invariants are multi-linear maps from the cohomology of \(X\) to \(\CC\), this makes sense.

Consequently, when we write
\[
\langle \gamma_0^{k_0}, \ldots, \gamma_m^{k_m}\rangle_{g,\beta}^X
\]
we are not using the exponents as {\em multiplicative} exponents, but instead as a way of denoting repeated entries. That is, we have
\[
\langle \ldots, \gamma_i^{k_i}, \ldots \rangle_{g, \beta}^X = \langle \ldots, \underbrace{\gamma_i, \ldots, \gamma_i}_{k_i}, \ldots \rangle_{g, \beta}^X
\]
\end{remark}

\begin{remark}
From a physics standpoint (and from a mirror symmetry standpoint) we should not really consider \(q\) as a formal variable at all. We should instead consider it as a coordinate on the ``K\"ahler moduli space of \(X\)'', which we denote by \(\cM_K\). That is, we can consider the function
\[
q : \cM_K \times H_2(X) \to \CC \qquad (\omega, \beta) \mapsto q^\beta = e^{2\pi i \int_\beta \omega}
\]
In this sense, we should regard the Gromov-Witten potential as a function
\[
\Phi_g^X : \cM_K \to \CC.
\]
However, we must then contend with issues of convergence. To avoid these, one can consider it to be a purely formal series; that is, we consider it as an element of the ring
\[
H^*(X) \llbracket y_0, \ldots, y_m, q\rrbracket = H^*(X) \otimes_\CC \CC \llbracket y_0, \ldots, y_m, q\rrbracket
\]
\end{remark}

So what can we do with this gadget? The first thing that we can do is to simplify it by using the divisor axiom. Let us focus, for fixed \(\beta\) and for \(1 \leq i \leq r\), on the sum
\[
\sum_{k_i} \langle\gamma_0^{k_0}, \ldots, \gamma_i^{k_i}, \ldots, \gamma_m^{k_m}\rangle_{g,\beta}^X \frac{y_i^{k_i}}{k_i!}
\]
Repeated use of the divisor axiom yields that this is
\[
\sum_{k_i} \langle\gamma_0^{k_0}, \ldots, \widehat{\gamma_i^{k_i}}, \ldots, \gamma_m^{k_m}\rangle_{g,\beta}^X \Big(\int_\beta \gamma_i\Big)^{k_i}\frac{y_i^{k_i}}{k_i!} = \langle\gamma_0^{k_0}, \ldots, \widehat{\gamma_i^{k_i}}, \ldots, \gamma_m^{k_m}\rangle_{g,\beta}^X e^{y_i\int_\beta \gamma_i}
\]
which means that the terms coming from divisors enter only within exponentials. In particular, we can write
\[
\Phi_g^X = \sum_{k_0, k_{r+1}, \ldots, k_m} \sum_{\beta \in H_2(X)} \langle\gamma_0^{k_0}, \gamma_{r+1}^{k_{r+1}}, \ldots, \gamma_m^{k_m}\rangle_{g,\beta}^X \frac{y_0^{k_0}}{k_0!} \frac{y_{r+1}^{k_{r+1}}}{k_{r+1}!} \cdots \frac{y_m^{k_m}}{k_m!} q^\beta\prod_{i=1}^r e^{y_i \int_\beta \gamma_i}
\]
There is even further simplification due to the point mapping axiom. Let us demonstrate by computing \(\Phi_0^{\PP^2}\).

\begin{example}
We will choose as a basis of cohomology the classes \(1, L, pt\), and then our homology basis will be dual to the class of a line. That is, we will choose as a generator of \(H_2(\PP^2)\) the class \(\beta\) such that \(\int_\beta L = 1\).

Let us now compute our invariants. Recall that from above, we only need to worry about non-divisor invariants. We have previously seen that if \(d \neq 0\), then \(\langle \gamma_1, \ldots, \gamma_p \rangle_{0, d\beta}^{\PP^2} = 0\) unless
\begin{enumerate}
\item \(p = 3d - 1\)
\item \(\gamma_i = pt\) for all \(i\).
\end{enumerate}

It follows that our generating function will be of the form
\[
\Phi_0^{\PP^2} = \sum_{k_0, k_1, k_2} \langle 1^{k_0}, L^{k_1}, pt^{k_2} \rangle_{0,0}^{\PP^2} \frac{y_0^{k_0}}{k_0!}\frac{y_1^{k_1}}{k_1!}\frac{y_2^{k_2}}{k_2!} + \sum_{d=1}^\infty \langle pt^{3d-1} \rangle_{0, d\beta}^{\PP^2} e^{dy_1} \frac{y_2^{3d-1}}{(3d-1)!} q^d
\]

We can now use the point mapping axiom to simplify the first term: since the homology class is zero, the terms of the first sum will all be zero unless \(k_0 + k_1 + k_2 = 3\), in which case the invariant will be nothing but the integral
\[
\int_{\PP^2} \underbrace{L \smile \cdots \smile L}_{k_1} \smile \underbrace{ pt \smile \cdots \smile pt}_{k_2}
\]
However, this is zero except in the cases
\begin{center}
\begin{tabular}{c|c|c|c}
$k_0$ & $k_1$ & $k_2$ & \(\langle 1^{k_0}, L^{k_1}, pt^{k_2} \rangle_{0,0}^{\PP^2}\) \\
\hline
2 & 0 & 1 & 1\\
1 & 2 & 0 & 1\\
\end{tabular}
\end{center}
and so the potential is
\[
\Phi_0^{\PP^2} = \frac{1}{2}(y_0^2y_2 + y_0y_1^2) + \sum_{d=1}^\infty \langle pt^{3d-1} \rangle_{0, d\beta}^{\PP^2} e^{dy_1} \frac{y_2^{3d-1}}{(3d-1)!} q^d
\]
\end{example}

We now remark that due to the point mapping axiom, that the Gromov-Witten potential contains (as its classical part) all of the triple products in the cohomology of \(X\). In a certain sense (due to Poincar\'e duality), the classical part of the potential exactly encodes the product structure on cohomology.

\begin{definition}
Let \(X\) be a smooth projective variety, and let \(\Phi_g^X\) be its Gromov-Witten potential. We define the \idx{classical part} of the genus 0 Gromov-Witten potential to be the terms with \(\beta = 0\):
\[
\Phi_{0,classical}^X(y_0, \ldots, y_m) = \sum_{\substack{k_0, \ldots, k_m\geq 0\\ k_0 + \cdots + k_m = 3}} \langle\gamma_0^{k_0}, \ldots, \gamma_m^{k_m}\rangle_{0,0}^X \frac{y_0^{k_0}}{k_0!} \cdots \frac{y_m^{k_m}}{k_m!}
\]
Note that we only look at \(\sum k_i = 3\) due to the point mapping axiom from earlier, which tells us which invariants contribute when \(\beta = 0\). Morally, this can be more simply written as
\[
\Phi_{0, classical}^X = \sum_{i,j,k} \langle \gamma_i, \gamma_j, \gamma_k\rangle_{0,0}^X y_i y_j y_k
\]
although we should be careful as this does not include necessary symmetrization factors.

We further define the \idx{quantum part} to be the other terms:
\[
\Phi_{0,quantum}^X(y_0, \ldots, y_m, q) = \sum_{k_0, \ldots, k_m} \sum_{0 \neq \beta \in H_2(X)} \langle\gamma_0^{k_0}, \ldots, \gamma_m^{k_m}\rangle_{0,\beta}^X \frac{y_0^{k_0}}{k_0!} \cdots \frac{y_m^{k_m}}{k_m!} q^\beta
\]
In particular,
\[
\Phi_0^X = \Phi_{0, classical}^X + \Phi_{0, quantum}^X
\]
\end{definition}

\begin{example}
For \(\PP^2\), we have that
\[
\Phi_{0,classical}^{\PP^2} = \frac{1}{2}(y_0^2y_2 + y_0y_1^2)
\]
\end{example}

Given a basis of \(H^*(X)\), define now the matrix \((g_{ij})\) by
\[
g_{ij} = \int_X \gamma_i \smile \gamma_j
\]
As \(X\) is smooth, Poincar\'e duality tells us that this matrix is invertible. Denote its inverse by \(g^{ij}\).

Now, the idea is the following. We have a trilinear product defined on \(H^*(X)\) given by
\[
F: \gamma_1 \otimes \gamma_2 \otimes \gamma_3 \mapsto \int_X \gamma_1 \smile \gamma_2 \smile \gamma_3
\]
This is equivalent, by a usual argument, to a map
\[
H^{k_1}(X) \otimes H^{k_2}(X) \to H^{n - k_1 - k_2}(X)^\vee
\]
However, using Poincar\'e duality, this last space is isomorphic to \(H^{k_1 + k_2}(X)\), whence the product. More precisely, if we define the elements \(\gamma^i = \sum_{k=1}^m g^{ik}\gamma_k\)
\[
\gamma_1 * \gamma_2  = \sum_{k=0}^m F(\gamma_1, \gamma_2, \gamma_k)\gamma^k
\]
then it follows that this new product is in fact nothing but the original cup product.

We now use this to define a new product, the so-called {\em (big) quantum product}; this is a deformation of the usual cup product, in a sense which we will make clear.

\begin{definition}
Let \(X\) be a smooth projective variety, let \(\{\gamma_i\}_i\) be a basis for its cohomology (as above), and let \(\Phi_0^X\) be its genus 0 Gromov-Witten potential. Define as above \(g^{ij}\) to be the inverse of the matrix \(g_{ij} = \int_X \gamma_i \smile \gamma_j\) and \(\gamma^i = \sum_{k=1}^m g^{ik}\gamma_k\).

Define the \idx{big quantum product} on \(H^*(X)\llbracket y_0, \ldots, y_m ,q \rrbracket\) to be
\[
\gamma_i * \gamma_j = \sum_{k=0}^m \frac{\partial^3 \Phi_0^X}{\partial y_i \partial y_j \partial y_k} \gamma^k
\]
\end{definition}

\begin{remark}
If we were to use only the classical part of the quantum product, this would be nothing but the usual cup product.
\end{remark}

\begin{remark}
We distinguish the \idx{small quantum product} from the big quantum product by restricting ourselves to invariants of the form \(\langle \gamma_1, \gamma_2, \gamma_3\rangle_{0, \beta}^X\), the so-called \idx{three-point invariants}.
\end{remark}

\begin{example}
Let us compute some of the quantum product for \(\PP^2\). In such a case, we choose (as usual) a basis of cohomology given by \(\gamma_0 = 1, \gamma_1 = L, \gamma_2 = pt\). It follows that
\[
\gamma^0 = \gamma_2= [pt] \qquad \qquad \gamma^1 = \gamma_1 = L \qquad \qquad \gamma^2 = \gamma_0 = 1 = [\PP^2]
\]
For simplicty, denote by 
\[
\Phi_{ijk} = \frac{\partial^3 \Phi_0^{\PP^2}}{\partial y_i \partial y_j \partial y_k}
\]
As the potential for \(\PP^2\) is given by
\[
\Phi_0^{\PP^2} = \frac{1}{2}(y_0^2y_2 + y_0y_1^2) + \sum_{d=1}^\infty N_d e^{dy_1} \frac{y_2^{3d-1}}{(3d-1)!} q^d
\]
we see that
\[
\gamma_1 * \gamma_1 = \Big(\int_{\PP^2}\gamma_1 \smile \gamma_1\Big)[pt] + \Phi_{111} L + \Phi_{112} [\PP^2]
\]
and in particular that the product is not of pure degree. Moreover, it contains (as we expect) a term corresponding to the original product, as well as other non-classical terms.
\end{example}

Now, a natural question that should arise whenever we define a new product on some algebra is what properties it has. Is it commutative? Associative?

We begin with the following fact.

\begin{theorem}
The genus 0 Gromov-Witten potential \(\Phi\) of a smooth projective variety \(X\) satisfies the WDVV equation
\[
\sum_{a,b} \Phi_{ija}g^{ab}\Phi_{bk\ell} = (-1)^{\deg \gamma_i(\deg \gamma_j + \deg \gamma_k)} \sum_{a,b} \Phi_{jka}g^{ab}\Phi_{bi\ell}
\]
for all \(0 \leq i, j, k, \ell \leq m\), where
\[
\Phi_{ijk} = \frac{\partial^3 \Phi}{\partial y_i \partial y_j \partial y_k}
\]
\end{theorem}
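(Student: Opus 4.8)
The WDVV equation is exactly the statement that the big quantum product is associative, so the entire content is geometric: it reflects a linear relation among boundary divisors in the moduli space $\cMb_{0,4}$. I would first unwind what the third derivatives mean. Differentiating the genus-zero potential three times, and using the convention that exponents denote repeated entries, one finds that
\[
\Phi_{ijk} = \sum_{\beta}\sum_{N\geq 0}\frac{1}{N!}\langle \gamma_i,\gamma_j,\gamma_k,\underbrace{\gamma_t,\ldots,\gamma_t}_{N}\rangle_{0,\beta}^X\, q^\beta,
\]
where $\gamma_t = \sum_a y_a\gamma_a$ is the ``generic'' cohomology insertion; the multinomial expansion in the $\gamma_t$'s recovers the individual $y$-monomials. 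Thus $\Phi_{ijk}$ is the generating series of all genus-zero invariants with three prescribed insertions $\gamma_i,\gamma_j,\gamma_k$ and arbitrarily many generic ones.

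Next I would recognize each side of WDVV as a boundary contribution. The factor $\sum_{a,b} g^{ab}\,\gamma_a\otimes\gamma_b$ is precisely the K\"unneth decomposition of the diagonal class $[\Delta]\in H^*(X\times X)$, which is what the \emph{splitting axiom} inserts at a node where two components of a stable map are glued. Concretely, $\sum_{a,b}\Phi_{ija}g^{ab}\Phi_{bk\ell}$ is the generating series for invariants supported on the locus of $\cMb_{0,n}(X,\beta)$ whose generic point is a two-component curve carrying the markings $i,j$ (and some of the generic points, in some degree $\beta_1$) on one component and $k,\ell$ (and the rest, in degree $\beta-\beta_1$) on the other, with the node accounted for by $[\Delta]$.

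The heart of the argument is the relation in $\cMb_{0,4}\cong\PP^1$: its three boundary points $D(12\mid 34)$, $D(13\mid24)$, $D(14\mid23)$ are three points on a rational curve, hence linearly equivalent. I would introduce the forgetful morphism
\[
\pi:\cMb_{0,n}(X,\beta)\to\cMb_{0,4}
\]
remembering only the four marked points labelled by $i,j,k,\ell$ (then stabilizing), integrate the class $ev^*(\gamma_i\smile\gamma_j\smile\gamma_k\smile\gamma_\ell\smile\cdots)$ against $[\cMb_{0,n}(X,\beta)]^{vir}$, and pull the relation $D(12\mid34)\sim D(14\mid23)$ back along $\pi$. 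Evaluating each pulled-back boundary divisor via the splitting axiom turns $\pi^*D(ij\mid k\ell)$ into the left-hand side and $\pi^*D(i\ell\mid jk)$ into the right-hand side, summed over all ways of distributing the generic insertions and the degree $\beta$ between the two components.

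\textbf{The main obstacle.} The step that carries all the real weight is the splitting axiom: one must know that the restriction of $[\cMb_{0,n}(X,\beta)]^{vir}$ to a boundary divisor factors as the product of the virtual classes of the two component moduli spaces, fibred over $X$ along the evaluation maps at the node. This compatibility of the virtual fundamental class with gluing is one of the Behrend--Fantechi axioms, and in keeping with the informal treatment of $[\,\cdot\,]^{vir}$ here I would simply invoke it. Granting it, the rest is bookkeeping: matching the summation over intermediate classes with $\sum_{a,b}g^{ab}\gamma_a\otimes\gamma_b$, and tracking the Koszul sign $(-1)^{\deg\gamma_i(\deg\gamma_j+\deg\gamma_k)}$ that appears when one reorders the supercommutative insertions to pass from the grouping $(ij)(k\ell)$ to $(jk)(i\ell)$, that is, from moving $\gamma_i$ past $\gamma_j$ and $\gamma_k$.
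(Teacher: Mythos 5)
Your proposal is correct and follows exactly the route the paper itself gestures at: the paper omits the proof entirely, noting only that it relies on the forgetful map \(\cMb_{0,n}(X,\beta)\to\cMb_{0,4}\) and the linear equivalence of the boundary divisors of \(\cMb_{0,4}\cong\PP^1\), which are precisely the two pillars of your argument. You go further than the paper by correctly identifying the splitting axiom for the virtual class (with \(\sum_{a,b}g^{ab}\gamma_a\otimes\gamma_b\) as the K\"unneth decomposition of the diagonal) as the step carrying the real weight, and by accounting for the Koszul sign; both points are right and fill in exactly what the paper leaves unsaid.
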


We will not go over a proof of this, but this essentially relies on the facts that
\begin{enumerate}
\item there is a forgetful map \(\cMb_{0,n}(X, \beta) \to \cMb_{0,4}\), and
\item all divisors on \(\cMb_{0, 4}\) are linearly equivalent.
\end{enumerate}

We can now conclude the following.

\begin{theorem}
The big quantum product \(*\) defined on \(H^*(X)\llbracket y_0, \ldots, y_m, q \rrbracket\) is associative and graded commutative.
\end{theorem}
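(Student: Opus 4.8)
The plan is to reduce both properties to statements about the structure constants $\Phi_{ijk}$ and then read them off from the symmetries of the potential together with the WDVV equation. Since $*$ is defined to be $H^*(X)\llbracket y_0,\ldots,y_m,q\rrbracket$-bilinear, it suffices to verify graded commutativity and associativity on the basis elements $\gamma_i$; the extension to arbitrary elements of the module is then automatic (carrying the usual Koszul signs when structure constants are moved past odd classes). Throughout I will write $\gamma_i * \gamma_j = \sum_k \Phi_{ijk}\gamma^k$, with $\gamma^k = \sum_\ell g^{k\ell}\gamma_\ell$.

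Graded commutativity is the quick part. The key observation is that the third partials $\Phi_{ijk}$ are graded-symmetric in their three indices, a symmetry inherited directly from that of the Gromov-Witten invariants $\langle\cdots\rangle_{0,\beta}^X$: permuting two entries of the bracket permutes two marked points on $\cMb_{0,n}(X,\beta)$, which introduces precisely the sign $(-1)^{\deg\gamma_i\deg\gamma_j}$. Hence $\Phi_{ijk} = (-1)^{\deg\gamma_i\deg\gamma_j}\Phi_{jik}$, and therefore $\gamma_i*\gamma_j = (-1)^{\deg\gamma_i\deg\gamma_j}\gamma_j*\gamma_i$, which is exactly graded commutativity.

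For associativity I would expand both triple products using the definition twice and compare the coefficient of a fixed $\gamma^\ell$. One finds
\[
(\gamma_i * \gamma_j) * \gamma_k = \sum_\ell \Big( \sum_{a,b} \Phi_{ija}\, g^{ab}\, \Phi_{bk\ell} \Big) \gamma^\ell,
\]
while on the other side
\[
\gamma_i * (\gamma_j * \gamma_k) = \sum_\ell \Big( \sum_{a,b} \Phi_{jka}\, g^{ab}\, \Phi_{ib\ell} \Big) \gamma^\ell.
\]
The coefficient of $\gamma^\ell$ on the first line is exactly the left-hand side of the WDVV equation. Rewriting $\Phi_{ib\ell}$ on the second line by the graded symmetry just established, the coefficient there becomes, up to the Koszul sign $(-1)^{\deg\gamma_i\deg\gamma_b}$, the quantity $\sum_{a,b}\Phi_{jka}g^{ab}\Phi_{bi\ell}$ appearing on the right-hand side of WDVV. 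The theorem then asserts these two sums agree up to the factor $(-1)^{\deg\gamma_i(\deg\gamma_j+\deg\gamma_k)}$, so the whole point is to check that this WDVV sign and the accumulated Koszul signs cancel, leaving the two coefficients equal.

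The main obstacle is precisely this sign bookkeeping in the genuinely graded setting, where the $y_i$ dual to odd classes must be treated as anticommuting variables and each rearrangement contributes a Koszul factor. In the case most relevant to these notes, namely a variety with only even cohomology such as $\PP^2$, every degree is even, so all the signs—both the WDVV sign and the symmetrization signs—are trivial, and the coefficient comparison above reduces immediately to WDVV: associativity is then a one-line consequence. The remaining work, which I would regard as routine but delicate, is to verify that the signs still conspire correctly once odd classes are present.
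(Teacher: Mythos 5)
Your proposal is correct and follows exactly the route the paper intends: graded commutativity from the symmetry of the genus-zero invariants under permutation of insertions, and associativity by expanding both triple products and identifying the coefficient of \(\gamma^\ell\) with the two sides of the WDVV equation, whose sign is precisely what absorbs the Koszul signs. The paper itself offers no proof at all (it simply states the theorem as a consequence of WDVV), so your write-up supplies more detail than the source; the only loose end, which you candidly flag, is the full sign verification in the presence of odd cohomology, and that is indeed routine but worth doing once carefully.
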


We are now in a position to show our main result; that is, we will use all of the above formalism to compute the recursion for the number of degree \(d\) plane curves passing through \(3d -1 \) points.

\begin{theorem}
Let \(N_d\) denote the number of degree \(d\) plane curves passing through \(3d -1\) points. Then \(N_d\) satisfies the recurrence relation
\[
N_d = \sum_{d_1 + d_2 = d}\Big(d_1^2d_2^2 N_{d_1}N_{d_2} {3d - 4 \choose 3d_1 - 2} - d_1^3d_2N_{d_1}N_{d_2} {3d - 4 \choose 3d_1 - 1}\Big)
\]
with initial conditions \(N_1 = 1\).
\end{theorem}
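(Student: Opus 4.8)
The plan is to specialize the WDVV equation from the theorem above to $\PP^2$, using the explicit potential
\[
\Phi_0^{\PP^2} = \tfrac{1}{2}(y_0^2 y_2 + y_0 y_1^2) + \sum_{d \geq 1} N_d \, e^{d y_1} \frac{y_2^{3d-1}}{(3d-1)!}\, q^d
\]
computed earlier. First I would record the Poincar\'e pairing in the basis $\gamma_0 = 1$, $\gamma_1 = L$, $\gamma_2 = pt$: the only nonzero entries are $g_{02} = g_{20} = g_{11} = 1$, so the matrix is anti-diagonal and equal to its own inverse, giving $g^{02} = g^{20} = g^{11} = 1$ and all other $g^{ab} = 0$. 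Since every class here has even degree, the sign prefactor in the WDVV equation is $+1$ for all index choices, which removes any bookkeeping of signs.

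Next I would compute the third partials $\Phi_{ijk}$. The crucial structural observation is that the quantum part of $\Phi_0^{\PP^2}$ does not depend on $y_0$, so any $\Phi_{ijk}$ with a $0$ among its indices is purely classical; from $\tfrac{1}{2}(y_0^2y_2 + y_0y_1^2)$ one reads off $\Phi_{002} = \Phi_{011} = 1$ (and their permutations) and nothing else. The remaining relevant derivatives are purely quantum:
\[
\Phi_{222} = \sum_d N_d \frac{y_2^{3d-4}}{(3d-4)!}e^{dy_1}q^d, \quad \Phi_{112} = \sum_d N_d\, d^2 \frac{y_2^{3d-2}}{(3d-2)!}e^{dy_1}q^d,
\]
\[
\Phi_{111} = \sum_d N_d\, d^3 \frac{y_2^{3d-1}}{(3d-1)!}e^{dy_1}q^d, \quad \Phi_{122} = \sum_d N_d\, d \frac{y_2^{3d-3}}{(3d-3)!}e^{dy_1}q^d.
\]

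I would then feed the index choice $(i,j,k,\ell) = (1,1,2,2)$ into the WDVV equation. Because $g^{ab}$ is supported on $(0,2),(2,0),(1,1)$, each side collapses to three terms; using $\Phi_{110} = \Phi_{011} = 1$, $\Phi_{022} = 0$, and $\Phi_{012} = 0$, the left side becomes $\Phi_{222} + \Phi_{111}\Phi_{122}$ and the right side becomes $\Phi_{112}^2$, yielding the scalar identity
\[
\Phi_{222} = \Phi_{112}^2 - \Phi_{111}\Phi_{122}.
\]
Substituting the series and extracting the coefficient of $\frac{y_2^{3d-4}}{(3d-4)!}e^{dy_1}q^d$ gives $N_d$ on the left (the products correctly match $q^{d_1+d_2} = q^d$ and $e^{(d_1+d_2)y_1} = e^{dy_1}$, and the $y_2$-exponents add to $3d-4$ in both $\Phi_{112}^2$ and $\Phi_{111}\Phi_{122}$). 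Finally I would apply the Cauchy-product identity $\frac{1}{a!\,b!} = \frac{1}{(a+b)!}\binom{a+b}{a}$ with $a+b = 3d-4$ to convert the factorials into binomial coefficients, producing exactly
\[
N_d = \sum_{d_1+d_2=d}\Big(d_1^2 d_2^2 N_{d_1}N_{d_2}\binom{3d-4}{3d_1-2} - d_1^3 d_2 N_{d_1}N_{d_2}\binom{3d-4}{3d_1-1}\Big),
\]
with the base case $N_1 = 1$ being the single line through two points.

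The substantive obstacle is not any of these manipulations but the input theorems I am allowed to cite: the entire computation rests on WDVV (associativity of the big quantum product) and on the dimension/axiom arguments that forced the potential into the above shape. Granting those, the only genuine care points are choosing the indices so that a purely classical term $\Phi_{110}=1$ survives to make the left-hand side linear in $N_d$, keeping the anti-diagonal metric straight, and executing the coefficient extraction and the binomial rewriting accurately; the vanishing of the sign prefactor and the $y_0$-independence of the quantum part are what make the collapse so clean.
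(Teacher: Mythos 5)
Your proposal is correct and follows essentially the same route as the paper: specialize WDVV to $\PP^2$ to get $\Phi_{222} = \Phi_{112}^2 - \Phi_{111}\Phi_{122}$, substitute the explicit potential, and extract the coefficient of $e^{dy_1}y_2^{3d-4}q^d/(3d-4)!$ to obtain the binomial recursion. The only difference is that you also derive the specialized WDVV identity from the general equation via the index choice $(1,1,2,2)$ and the anti-diagonal pairing, a step the paper asserts and relegates to an exercise; your computation of that step is accurate.
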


\begin{proof}
The WDVV equation for \(\PP^2\) is given by
\[
\Phi_{222} = \Phi_{112}^2 - \Phi_{111}\Phi_{122}.
\]
Note that the term of interest, \(N_d\), shows up on the left-hand side in the term
\[
N_d e^{dy_1} \frac{y_2^{3d-4}}{(3d-4)!}
\]
and so this suggests looking for terms on the right-hand side whose exponent of \(y_2\) is also \(3d-4\).

The first term, \(\Phi_{112}^2\) is given by
\[
\Big(\sum_{d=1}^\infty d^2N_d e^{dy_1} \frac{y_2^{3d-2}}{(3d-2)!}q^d\Big)^2
\]
and so out desired terms come from picking all \(d_1 + d_2 = d\) giving us
\[
\sum_{d_1 + d_2 = d} d_1^2d_2^2 N_{d_1}N_{d_2} e^{dy_1} \frac{y_2^{3d-4}}{(3d_1 - 2)!(3d_2-2)!}
\]
Similarly, in the second term we have
\[
\Big(\sum_{d=1}^\infty d^3N_d e^{dy_1} \frac{y_2^{3d-1}}{(3d-1)!}q^d\Big)
\Big(\sum_{d=1}^\infty dN_d e^{dy_1} \frac{y_2^{3d-3}}{(3d-3)!}q^d\Big)
\]
which yields a term of the form
\[
\sum_{d_1 + d_2 = d} d_1^3d_2 N_{d_1}N_{d_2} e^{dy_1} \frac{y_2^{3d-4}}{(3d_1 - 3)!(3d_2-1)!}
\]
The formula now follows from equating the left- and right-hand sides.
\end{proof}

\section{Conclusion}

Counting curves in varieties is hard. Computing Gromov-Witten invariants is also quite hard. Nevertheless, the formalism so-obtained is quite powerful in that it not only introduces a rigorous definition of counts of curves (modulo some details), but it also provides a lot of structure that one can use to understand these counts.

It seems in general that the key to understanding how to solve difficult problems is often to make them seemingly harder---we find an infinite family of similar problems, but use then the relations between each of the problems to help solve them all in one fell swoop.

Computing the number of degree 5, or 7, or 83,124 rational plane curves would have been an insurmountable problem before. Gromov-Witten theory, however, lets us see the underlying pattern behind these numbers, and to solve them all in one fell swoop.

\section{Exercises}

\begin{enumerate}
\item\label{ex_genus_plane_curve} Show that a smooth degree \(d\) plane curve has genus \(\frac{(d - 1)(d - 2)}{2}\). Hint: Consider the adjunction formula, that says that for any smooth divisor \(Y \subset X\), that
\[
K_Y \cong K_X|_Y \otimes N_{Y/X}
\]
What is the relationship between the degree of the canonical bundle of a curve and its genus?

\item Show that a marked nodal curve \((C, x_1, \ldots, x_n)\) has only finitely many automorphisms whenever
\[
2g - 2 + n > 0.
\]

\item Go through a number of papers and try to see which ones use \verb|\langle|, \verb|\rangle| and which ones use \(<, >\). Which look better?

\item Show that the forgetful map
\[
\pi_{n+1} : \cMb_{g,n+1}(X, \beta) \to \cMb_{g,n}(X, \beta)
\]
exists provided that one of
\begin{enumerate}
\item \(n + 2g \geq 4\)
\item \(\beta \neq 0\) and \(n \geq 1\)
\end{enumerate}
is satisfied.

\item With a bit of fudging, we will make an effort to compute the number of lines through a pair of points by computing (by hand) the Gromov-Witten invariant \(\langle pt, pt \rangle_{0, 1}^{\PP^2} = 1\).

As this will be an integral over the moduli space \(\cMb_{0, 2}(\PP^2, H)\), we need to understand this moduli space. We first note that as before, we have that
\[
\cMb_{0,0}(\PP^2, H) = (\PP^2)^*
\]
i.e. it is the collection of lines in \(\PP^2\).
\begin{enumerate}
\item Show (loosely) that the moduli space \(\cMb_{0, n+1}(X, \beta)\) is the \idx{universal family} over \(\cMb_{0, n}(X, \beta)\). That is, it fits into a digram
\[
\xymatrix{
\cMb_{0,n+1}(X, \beta) \ar[d]_\pi \ar[rr]^f & & X \\
\cMb_{0, n}(X, \beta)
}
\]
where the fibre over a point in the base (i.e. a stable map \(f : (C, x_1, \ldots, x_n) \to X\)) is the curve together with the map \(f\). Hint: This may be easier if you think of the similar case of the moduli space of curves, where it is simpler to see that \(\cMb_{0, n+1} \to \cMb_{0, n}\) is the universal family.

\item Using the previous part, describe the moduli space \(\cMb_{0, 2}(\PP^2, H)\) as
\[
\cMb_{0, 2}(\PP^2, H) = \{ (\ell, x, y) \in (\PP^2)^* \times \PP^2 \times \PP^2 \mid x, y \in \ell\}
\]
with the evaluation maps \(ev_i\) being the projections onto the two copies of \(\PP^2\).

\item Using the fact (and this is the loosest part of the exercise) that
\[
ev_1^*([pt]) = [\{(\ell, x, y) \mid x = pt\}]
\]
(and similarly for \(ev_2\)), compute the Gromov-Witten invariant \(\langle pt, pt \rangle_{0, H}^{\PP^2}\).
\end{enumerate}

\item\label{ex_lattices} Show that the number of index \(d\) sublattices of \(\ZZ^2\) is given by
\[
\sigma_1(d) = \sum_{k \mid d} k
\]
and hence that the Gromov-Witten invariant \(\langle pt \rangle_1^E = \sigma_1(d)\).

\item How would the computation from the previous exercise change if we were to look at the invariant \(\langle\ \rangle_1^E\)?

\item Define \(F(\gamma_1, \gamma_2, \gamma_3) = \int_X \gamma_1 \smile \gamma_2 \smile \gamma_3\). Verify that the prouct \(*\) given by
\[
\gamma_1 * \gamma_2 = \sum_{k=1}^m F(\gamma_1, \gamma_2, \gamma_k)\gamma^k
\]
is the usual cup product for the spaces
\begin{enumerate}
\item \(\PP^2\)
\item \(S^1 \times S^1\)
\item A curve of genus \(g > 1\).
\end{enumerate}
Note that a good choice of cohomology basis may make this much easier.

\item Define the \idx{small quantum product} via
\[
\gamma_i *_s \gamma_j = \sum_{k=0}^m \sum_{\beta \in H_2(X)} \langle \gamma_i,\gamma_j,\gamma_k\rangle_{0,\beta}^X q^\beta \gamma^k
\]
and compute the small quantum product for \(\PP^N\). That is, compute the product structure that is obtained on the ring \(H^*(\PP^N) \otimes_\CC \CC[q]\). Hint: using associativity of the product, show that it is enough to compute \(H^n * H\), where \(H\) is the hyperplane class. This can then be computed quite simply based on what we have seen elsewhere.

\item Verify that the WDVV equations for \(\PP^2\) are given by
\[
\Phi_{222} + \Phi_{111}\Phi_{122} = \Phi_{112}^2
\]

\item Compute the first few terms \(N_d\) (for \(d = 2, 3, 4, 5\)) and verify that they agree with the predictions
\[
N_2 = 1 \qquad \qquad N_3 = 12 \qquad\qquad N_4 = 620 \qquad\qquad N_5 = 87,304
\]
\end{enumerate}

\bibliographystyle{amsplain}
\bibliography{notes}


\end{document}